\documentclass[11pt]{article}

\usepackage{amsmath, amsfonts, amsthm, bbm}
\usepackage{bbm,verbatim,color}

\newcommand{\N}{\mathbb{N}}

\newcommand{\dd}{\mathrm{d}}
\newcommand{\E}{\mathbf{E}}
\newcommand{\p}{\mathbf{P}}

\theoremstyle{plain}
\newtheorem{lemma}{Lemma}

\newtheorem*{theorem*}{Theorem}

\newtheorem*{corollary*}{Corollary}

\theoremstyle{remark}
\newtheorem*{remark*}{Remark}

\title{Branching Processes with Immigration in a Random Environment
-- the Grincevi\v{c}ius--Grey setup}

\author{P\'eter Kevei\thanks{
Bolyai Institute, University of Szeged,
Aradi v\'ertan\'uk tere 1, 6720 Szeged, Hungary;
e-mail: \texttt{kevei@math.u-szeged.hu}} 
}

\date{}

\begin{document}

\maketitle

\begin{abstract}
We determine the tail asymptotics of the stationary distribution
of a branching process with immigration in a random environment, 
when the immigration distribution dominates the offspring distribution.
The assumptions are the same as in the Grincevi\v{c}ius--Grey 
theorem for the stochastic recurrence equation.

\noindent 
\textit{Keywords:} branching process in a random environment, 
regularly varying stationary sequences, 
stochastic recurrence equation\\
\noindent \textit{MSC2010:} {60J80, 60F05}
\end{abstract}

\section{Introduction and the main result}

A branching process with immigration in a random environment is a usual 
Galton--Watson process with immigration, where the offspring and immigration
distribution in each generation is governed by an independent identically 
distributed (iid) sequence of probability measures.
Let $\Delta$ denote the set of probability measures on $\N = \{0,1,\ldots \}$,
and consider the Borel-$\sigma$-algebra on it induced by the total variation 
distance. Let $\xi, \xi_0, \xi_1, \ldots $ be an iid sequence in $\Delta^2$, the 
components $\xi_n = ( \nu_{\xi_n}, \nu_{\xi_n}^\circ)$ 
represent the offspring and immigration distribution in the consecutive 
generations. Let $X_0 = x \in \N$, and
\begin{equation} \label{eq:def-X}
X_{n+1} = \sum_{i=1}^{X_n} A_i^{(n+1)} + B_{n+1}
=: \theta_{n+1} \circ X_n + B_{n+1}, \quad n \geq 0, 
\end{equation}
where conditioned on the environment 
$\mathcal{E} = \sigma(\xi_0, \xi_1, \ldots)$, the variables 
$\{ A_i^{(n)}$, 
$B_n :  n \in \N, i\geq 1 \}$ are  independent, and
for $n$ fixed, 
$(A_i^{(n)})_{i \geq 1}$ are iid with distribution $\nu_{\xi_n}$, and $B_n$ has 
distribution $\nu^\circ_{\xi_n}$. 
Note that given the environment the random variables are independent however,
$\nu_\xi$ and $\nu^\circ_\xi$ may depend. 
The variable $A_i^{(n)}$ is the number of offsprings 
of the $i$th element in the $(n-1)$st generation, 
and $B_n$ is the number of immigrants in the $n$th generation. 
We write 
$\theta_{n} \circ x  = \sum_{i=1}^{x} A_i^{(n)}$,
and 
$\theta_n\circ (x+y) = \theta_{n} \circ x + \theta_{n} \circ y$, 
keeping in mind that the 
two random operators on the right-hand side are not the same,
but independent conditioned on the environment.

For a random measure $\xi$ denote
\[ 
m(\xi) = \sum_{i=1}^\infty i  \nu_{\xi} (\{i\}) = \E [ A | \xi],
\] 
the expectation of its offspring 
distribution.
Our standing assumption is that for some $\kappa > 0$
\begin{equation} \label{eq:grgr}
\E (m(\xi)^\kappa) < 1,  \quad  
\E (A^{(1 \vee \kappa) + \delta}) < \infty,
\ \text{for some } \delta > 0, 
\end{equation}
where $a \vee b = \max \{ a, b \}$.
Then, by the Jensen inequality the process is subcritical, i.e.
$\E ( \log m(\xi) ) < 0$, which implies that the corresponding 
branching process in a random  environment without immigration 
dies out almost surely, see e.g.~the recent monograph by 
Kersting and Vatutin \cite[Section 2.2]{KerVat}.

By Theorem 3.3 in Key \cite{Key}
(see also Lemmas 1 and 3 in Basrak and Kevei \cite{BK}), under condition 
\eqref{eq:grgr} the Markov chain 
\eqref{eq:def-X}  has a unique stationary distribution $X_\infty$, which
using backward iteration can be represented as
\begin{equation*} 
X_\infty 
\stackrel{\mathcal{D}}{=}
\sum_{i=0}^\infty \theta_0 \circ \theta_1 \circ \ldots \circ \theta_{i-1}  \circ B_{i},
\end{equation*}
where $\stackrel{\mathcal{D}}{=}$ stands for equality in distribution.
The stationary distribution $X_\infty$ satisfies the distributional fixed point 
equation
\begin{equation*} 
X \stackrel{\mathcal{D}}{=} 
\sum_{i=1}^X A_i + B 
\end{equation*}
where  $(\xi,B, A_1,A_2,\ldots)$ and $X$ on the right-hand side are independent,
and conditionally on $\xi$ the variables $B, A_1, \ldots$ are independent, and
$A_1, A_2, \ldots$ are iid $\nu_{\xi}$, and $B$ has distribution $\nu_{\xi}^\circ$.

The main result of the paper characterizes the regular 
variation of the stationary distribution.

\begin{theorem*} 
Assume that there is a $\kappa > 0$ such that \eqref{eq:grgr} holds.
Let $\ell$ be a slowly varying function.
Then
\begin{equation} \label{eq:Btail}
\p ( B > x) \sim \frac{\ell(x)}{x^{\kappa}}, \quad \text{as } \ x \to \infty,
\end{equation}
if and only if
\begin{equation} \label{eq:stat-tail}
\p (X_\infty > x) \sim 
\frac{\ell(x)}{x^{\kappa}} \frac{1}{1 - \E (m(\xi)^\kappa)}, \quad 
\text{as } \ x \to \infty.
\end{equation}
\end{theorem*}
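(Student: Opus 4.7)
My plan is to reduce the theorem to the classical Grincevi\v{c}ius--Grey theorem for the stochastic recurrence equation, by linearizing the branching step $\theta_n \circ \cdot$ into multiplication by its conditional mean $M_n := m(\xi_n)$. Setting $\Pi_i := M_0 M_1 \cdots M_{i-1}$ (with $\Pi_0 := 1$), define the linearized stationary object
\[
Y_\infty := \sum_{i=0}^\infty \Pi_i B_i,
\]
which is the stationary solution of the SRE $Y \stackrel{\mathcal{D}}{=} M Y + B$ driven by the i.i.d.\ pairs $(M_i, B_i)$. Jensen's inequality applied to \eqref{eq:grgr} gives $\E m(\xi)^\kappa < 1$ and $\E m(\xi)^{\kappa+\epsilon'} < \infty$ for some $\epsilon' > 0$, so by Grincevi\v{c}ius--Grey the tail condition \eqref{eq:Btail} is equivalent to
\[
\p(Y_\infty > x) \sim \frac{\ell(x)}{x^\kappa} \cdot \frac{1}{1 - \E m(\xi)^\kappa}.
\]
Hence the theorem reduces to the tail equivalence $\p(X_\infty > x) \sim \p(Y_\infty > x)$.

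The key structural observation for this comparison is that the summands $Z_i := \theta_0 \circ \cdots \circ \theta_{i-1} \circ B_i$ in the backward representation of $X_\infty$ are \emph{conditionally independent} given the environment $\mathcal{E}$---since successive applications of $\theta_n \circ$ draw fresh i.i.d.\ offspring---and that $\E[Z_i \mid \mathcal{E}, B_i] = \Pi_i B_i$, so that $\E[X_\infty \mid \mathcal{E}, (B_j)_j] = Y_\infty$. Writing $W_j^{(i)} := \theta_j \circ \cdots \circ \theta_{i-1} \circ B_i$, the telescoping identity
\[
Z_i - \Pi_i B_i = \sum_{j=1}^{i} \Pi_{0,j-1} \bigl( W_{j-1}^{(i)} - M_{j-1} W_j^{(i)} \bigr)
\]
expresses each per-term fluctuation as a martingale-difference sum over generations with respect to the backward filtration $\sigma(\mathcal{E}, W_i^{(i)}, W_{i-1}^{(i)}, \ldots, W_j^{(i)})$, with each increment being, given $W_j^{(i)}$ and $\mathcal{E}$, a centered sum of $W_j^{(i)}$ i.i.d.\ copies of $A - m(\xi_{j-1})$.

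From here I would prove the concentration estimate $\p(|X_\infty - Y_\infty| > \epsilon x) = o(\ell(x) x^{-\kappa})$ for every $\epsilon > 0$. Choose $p \in (\kappa, (1 \vee \kappa) + \delta)$ from the moment hypothesis, and apply a von Bahr--Esseen inequality (for $p \in (1,2]$) or Rosenthal's inequality (for $p > 2$) at each generation of the telescoping identity. Iteratively averaging via $\E[W_j^{(i)} \mid \mathcal{E}, B_i] = \Pi_{j,i} B_i$ and summing over $i$ (geometric summability follows from $\E m(\xi)^{\kappa'} < 1$ for some $\kappa' \in (\kappa, p)$) should yield a recursive $L^p$-bound on $X_\infty - Y_\infty$. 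Since $\E B^p = \infty$ under \eqref{eq:Btail}, the estimate must be combined with a truncation of $B_i$ or a ``one-big-jump'' decomposition that isolates the dominant term, with the dominant part handled via the regular variation of $B$, the independence of $\Pi_i$ and $B_i$, and a Breiman-type estimate for $\p(\Pi_i B_i > y)$. The desired tail equivalence then follows from the two-sided inclusion
\[
\p(Y_\infty > (1+\epsilon)x) - \p(|X_\infty - Y_\infty| > \epsilon x) \leq \p(X_\infty > x) \leq \p(Y_\infty > (1-\epsilon)x) + \p(|X_\infty - Y_\infty| > \epsilon x),
\]
the slow variation of $\ell$, and letting $\epsilon \downarrow 0$; and this simultaneously proves both implications of the iff.

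The hardest part is this concentration estimate. The three moment regimes $\kappa \leq 1$, $1 < \kappa \leq 2$, $\kappa > 2$ require different inputs (Bahr--Esseen vs.\ Rosenthal); the random number $W_j^{(i)}$ of summands at each composition step feeds back into the next generation's bound and must not spoil the geometric decay inherited from $\E m(\xi)^\kappa < 1$; and the truncation level for $B_i$ must be chosen delicately so that both the truncated $L^p$-bound and the untruncated remainder are genuinely of smaller order than $\ell(x)/x^\kappa$. Once this technical estimate is in place, the reduction to Grincevi\v{c}ius--Grey handles everything else.
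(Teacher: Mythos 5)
Your plan is a valid, and in some ways structurally cleaner, alternative to the paper's proof: rather than truncating the backward series at a finite level $K$, controlling the head via a ``one big jump'' lemma plus a random-sum tail lemma, and controlling the tail via moment bounds (which is what the paper does), you compare $X_\infty$ directly with the linearized stationary solution $Y_\infty = \sum_i \Pi_i B_i$ and invoke the classical Grincevi\v{c}ius--Grey theorem as a black box. The reduction via the two-sided inclusion and a single concentration estimate $\p(|X_\infty - Y_\infty| > \varepsilon x) = o(\ell(x)x^{-\kappa})$ is sound, and it handles both implications at once (under either hypothesis, $\p(B>x)\le\p(X_\infty>x)$ gives $\E B^\beta<\infty$ for $\beta<\kappa$, which is all the moment input the estimate needs). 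This is essentially the same idea as the paper in disguise: the concentration estimate is precisely $\E|X_\infty-Y_\infty|^\alpha<\infty$ for some $\alpha\in(\kappa,\kappa+\delta)$, which is the paper's moment bound on $\sum_{i>K}(\Theta_{i-1}\circ B_i - \Pi_{i-1}B_i)$ with $K=0$ plus Markov.

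Three issues in the sketch of your ``hardest part,'' though. First, for $\kappa\le 1$ the tools you name (von Bahr--Esseen, Rosenthal) do not suffice: von Bahr--Esseen for $p\in(1,2]$ yields a factor $\E B$, which is infinite when $\kappa\le 1$. You need a refined bound that trades a lower power of the number of summands against a higher moment of the summand (the form used is $\E|\sum_{k=1}^n \widetilde A_k|^\alpha\le c\, n^{\alpha-\eta}\E|\widetilde A|^{\alpha/(\alpha-\eta)}$ with $\alpha-\eta<\kappa<\alpha$; cf.\ \cite[Lemma 2(i)]{BK}), and correspondingly $\E A^{1+\delta}<\infty$ is exactly what makes this work. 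Second, the worry about truncating $B_i$ is a detour: the moment inequalities only involve $\E B^\beta$ for $\beta<\kappa$, which is finite under \eqref{eq:Btail} (and under \eqref{eq:stat-tail}, since $X_\infty\ge B$ stochastically), so no truncation or one-big-jump decomposition of $B_i$ is needed. Third, the ``geometric summability'' step you gloss over --- showing $\E[(\Theta_{n-1}\circ 1)^\alpha]\le c\rho^n$ for some $\rho<1$ when $\E m(\xi)^\alpha<1$ and $\alpha>1$ --- is genuinely nontrivial, because $\Theta_{n-1}\circ 1$ is the whole branching tree, not just $\Pi_{n-1}$; the paper proves it by an inductive bootstrap on $\alpha\in(1,2]$, $(2,4]$, \dots, using \cite{BD}. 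Your generation-by-generation martingale telescoping could substitute for this, but you would still have to push the recursive $L^\alpha$ bound through the same induction. Once these three points are repaired, your route closes the argument.
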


The implication \eqref{eq:Btail} $\Rightarrow$ \eqref{eq:stat-tail} 
was investigated in deterministic 
environment, in which case $m(\xi ) \equiv \E (A) \in (0,1)$ is deterministic. 
Then we obtain a generalization of Theorem 2.1.1 in Basrak et al.~\cite{BKP}, 
where it is assumed that $\kappa \in (0,2)$, and 
if $\kappa \geq 1$ then $\E (A^2) < \infty$. 
(In fact for $\kappa < 1$ we need that $\E (A^{1 + \delta}) < \infty$ for 
some $\delta > 0$, whereas in \cite{BKP} only $\E (A) < 1$ is needed.)
Theorems 2.1 and 2.2 by 
Foss and Miyazawa \cite{FossMiy} covers more 
general tail behavior, not only regular variation. If $\E (B) < \infty$ then
their Theorem 2.1 implies our result, while if $\E (B) = \infty$ it is assumed
in \cite[Theorem 2.2]{FossMiy} that either $\E ( A^2) < \infty$, or 
$1 - \int_{x}^\infty \p ( A > u) \dd u$ is subexponential.
In the latter case, we only require that $\E (A^{1 + \delta}) < \infty$ for 
some $\delta$, which does not seem to imply the subexponential condition
(on a related question, on the subexponential property of the integrated 
tail distribution, see Section 1.4 in Embrechts et al.~\cite{EKM}).
Therefore, in some cases our results are new even in the deterministic 
setup. To the best of our knowledge, the converse implication
\eqref{eq:stat-tail} $\Rightarrow$ \eqref{eq:Btail}  was not treated earlier.

In the random environment setup 
the tail behavior was studied in \cite{BK} under a 
different assumption. In \cite{BK}, 
the main assumptions are Cram\'er's condition 
\begin{equation} \label{eq:cramer}
\E ( m(\xi)^{\kappa} ) = 1, \quad \text{for some } \ \kappa > 0,
\end{equation}
and $\E (A^\kappa) < \infty$, $\E (B^{\kappa}) < \infty$.
If some additional weak assumptions are satisfied, then the tail 
of $X_\infty$ is regularly varying with index $-\kappa$, that is
$\p ( X_\infty > x) \in \mathcal{RV}_{-\kappa}$. Therefore, in
\cite{BK} the tail behavior is governed by the offspring distribution,
while in the present paper the
tail is determined by the immigration.

The main idea of the proof is very simple. If a 
random sum $\sum_{i=1}^B A_i$ is large, where the summands are small,
then, by the law of large numbers, it is asymptotically 
$B \E (A)$, see \cite[Remark 2.4]{FossMiy}.
More precisely, we rely on the similarity between the asymptotic behavior 
of the branching process $(X_n)$ in \eqref{eq:def-X} and the 
stochastic recurrence equation defined by $Y_0 = y \geq 0$,
\begin{equation}  \label{eq:sre}
Y_{n+1} = C_{n+1} Y_n + D_{n+1}, \quad n \geq 0,
\end{equation}
where $(C, D), (C_1, D_1), \ldots$ are iid random vectors, with 
nonnegative components. For recent monographs on the stochastic 
recurrence equation we refer to Buraczewski et al.~\cite{BDM}, and 
to Iksanov \cite{Iksanov}. The tail behavior of the stationary
distribution, or, which is the same the solution to the corresponding 
stochastic fixed point equation,
is well-understood, see e.g.~\cite[Section 2.4]{BDM}: 
If $\E (C^\kappa ) = 1$, and $\E (D^\kappa) < \infty$ then by the 
Kesten--Grincevi\v{c}ius--Goldie theorem 
(called Kesten--Goldie theorem in \cite{BDM})
the tail is asymptotically $k x^{-\kappa}$, for some $k > 0$. 
For an extension of this result
see Kevei \cite{Kevei}.
While, if $\E (C^\kappa) < 1$ and $\E (C^{\kappa + \delta} ) < \infty$
for some $\delta > 0$,
then by the Grincevi\v{c}ius--Grey theorem 
$\p( D > \cdot ) \in \mathcal{RV}_{-\kappa}$ if and only if
$\p ( Y_\infty > \cdot ) \in \mathcal{RV}_{-\kappa}$, where $Y_\infty$
is the stationary distribution of \eqref{eq:sre}.

\smallskip 

The regular variation of the stationary distribution of a Markov chain
has a lot of consequences. Then, the
well-known theory of regularly varying time series apply, see 
e.g.~the recent monograph by Kulik and Soulier \cite{KS19}.
The conditions to apply the general theory, that is ergodicity, anticlustering,
and vanishing small values were all established in \cite{BK}.
In particular, if \eqref{eq:grgr} and \eqref{eq:stat-tail} hold then
all the results in Section 3 in \cite{BK} hold true
in the current setup, because in the proofs only the regular variation
of the stationary distribution was used, and that 
$\E ( m(\xi)^\alpha) < 1$, $\E (B^\alpha) < \infty$, for some $\alpha > 0$.
In particular, the tail process of $(X_n)$, the convergence of the point process, 
and the central limit theorems are given in Theorems 3--5 in \cite{BK}.

\section{Proof} \label{sec:tail}

For the analysis of the stationary distribution we need 
the tail behavior of randomly stopped sums of identically 
distributed, conditionally independent summands, where 
the number of terms dominates the summands. Such results 
for independent summands were subject of intensive investigations, 
see e.g.~Barczy et al.~\cite[Proposition D.3]{BBP},
Ale\v{s}kevi\v{c}ien\.{e} et al.~\cite[Theorem 1.2]{Alev},
Fa\"{y} et al.~\cite[Proposition 4.3]{Fay}, 
Robert and Segers \cite{RobertSegers}.
In the iid case the next statement coincides with 
Proposition D.3 in \cite{BBP} for $\kappa \geq 1$, while for 
$\kappa < 1$ we need $\E ( A^{1 + \delta} ) < \infty$ for some 
$\delta >0$, whereas in \cite{BBP} only $\E ( A )  < 1$ is assumed.

\begin{lemma} \label{lemma:rsum-tail}
Let $A, A_1, \ldots$ be identically distributed random variables,
independent given the environment $\xi$. Let $B$ be an integer-valued
random variable, independent of the $A$'s and $\xi$. Assume that 
$\p ( B > x ) = \frac{\ell(x)}{x^\kappa}$, 
$\E (A^{(1 \vee \kappa) + \delta }) < \infty$,
for some $\kappa > 0$, $\delta > 0$, and a slowly varying function $\ell$.
Then, as $x \to \infty$
\[
\p \left( \sum_{i=1}^B A_i > x \right) \sim 
\p ( B > x) \, \E ( m(\xi)^\kappa ).
\]
\end{lemma}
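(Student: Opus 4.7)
The plan is to condition on the environment $\xi$, apply the known iid version of the result, and then recover the unconditional statement via dominated convergence in $\xi$. Conditionally on $\xi$, the summands $(A_i)$ are iid with mean $m(\xi)$, and $B$ is still independent of $\xi$ with $\mathbf{P}(B > x) = \ell(x)/x^{\kappa}$. The iid ``small summands, heavy counter'' result (Proposition D.3 in \cite{BBP} for $\kappa \geq 1$, together with its variants in \cite{Alev,Fay,RobertSegers} covering $\kappa < 1$ under $\mathbf{E}(A^{1+\delta}) < \infty$) yields, almost surely in $\xi$,
\[
\mathbf{P}\!\left( \textstyle\sum_{i=1}^{B} A_i > x \,\big|\, \xi \right) \sim \mathbf{P}\bigl( B > x/m(\xi) \bigr) \sim m(\xi)^{\kappa}\, \mathbf{P}(B > x), \quad x \to \infty,
\]
where the last step is a direct application of the regular variation of $\mathbf{P}(B > \cdot)$. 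Taking expectations in $\xi$ would then give the lemma, provided we can justify the interchange of limit and expectation.

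For the upper bound I would fix $\varepsilon > 0$ and split
\[
\mathbf{P}\!\left( \textstyle\sum_{i=1}^{B} A_i > x \,\big|\, \xi \right) \leq \mathbf{P}\!\left( B > \tfrac{(1-\varepsilon)x}{m(\xi)} \right) + \mathbf{P}\!\left( \textstyle\sum_{i=1}^{B} A_i > x,\ B \leq \tfrac{(1-\varepsilon)x}{m(\xi)} \,\big|\, \xi \right).
\]
Potter's bounds on $\ell$ bound the ratio of the first term to $\mathbf{P}(B > x)$ by a constant multiple of $1 + m(\xi)^{\kappa + \varepsilon}$, which is integrable by $\mathbf{E}(A^{(1\vee\kappa)+\delta}) < \infty$ and Jensen's inequality, provided $\varepsilon$ is chosen small enough. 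For the second term, on the truncation event the conditional mean of the sum is at most $(1 - \varepsilon) x$, so a Fuk--Nagaev type inequality of order $p = (1\vee\kappa)+\delta$ (von Bahr--Esseen for $p < 2$, Rosenthal for $p \geq 2$), combined with Karamata's theorem to sum or integrate against the law of $B$, leaves a bound that is $o(\mathbf{P}(B > x))$ multiplied by a $\xi$-integrable factor built from $\mathbf{E}(A^{p}\mid \xi)$ and powers of $m(\xi)$.

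The matching asymptotic lower bound is softer: restricting to $\{B > (1+\varepsilon) x/m(\xi)\}$ and applying the conditional weak law of large numbers to the $A_i$ gives
\[
\mathbf{P}\!\left( \textstyle\sum_{i=1}^{B} A_i > x \,\big|\, \xi \right) \geq \mathbf{P}\!\left( B > \tfrac{(1+\varepsilon)x}{m(\xi)} \right) (1 - o(1)),
\]
so taking expectations, applying Fatou and the regular variation of $\mathbf{P}(B > \cdot)$ yields $\liminf \geq (1+\varepsilon)^{-\kappa}\, \mathbf{E}(m(\xi)^{\kappa})$, and $\varepsilon \downarrow 0$ closes the argument.

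The main obstacle is quantifying the Fuk--Nagaev estimate uniformly in $\xi$: one must track how the truncation and moment constants depend on the random law $\nu_{\xi}$, so that after integrating in $\xi$ the result is bounded by an integrable function independent of $x$. This is exactly where the slack beyond the critical moment---the hypothesis $\mathbf{E}(A^{(1\vee\kappa)+\delta}) < \infty$ rather than merely $\mathbf{E}(A^{1\vee\kappa}) < \infty$---is essential, both for the Fuk--Nagaev tail to decay faster than $x^{-\kappa}$ and for giving room to absorb Potter's $\varepsilon$ via Jensen applied to $m(\xi) = \mathbf{E}(A\mid\xi)$. Once these estimates are in place, the rest is a routine application of dominated convergence.
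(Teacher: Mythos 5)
Your proof is correct but organized differently from the paper's. The paper does not condition on $\xi$ at all: it writes $\sum_{i=1}^B A_i = \sum_{i=1}^B \widetilde A_i + B m(\xi)$ with $\widetilde A_i = A_i - m(\xi)$, shows via a \emph{global} moment bound (conditional Marcinkiewicz--Zygmund/von Bahr--Esseen inequalities from \cite[Lemma 2]{BK}, then taking expectation over $\xi$) that $\E |\sum_{i=1}^B \widetilde A_i|^\alpha < \infty$ for some $\alpha > \kappa$, hence $\p(\sum \widetilde A_i > x) = o(x^{-\alpha}) = o(\p(B>x))$, and then applies Breiman's lemma as a black box to $\p(B\, m(\xi) > (1\mp\varepsilon)x)$. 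You instead fix $\xi$, invoke the iid version of the lemma conditionally to get $\p(\sum_{i=1}^B A_i > x \mid \xi) \sim m(\xi)^\kappa \p(B>x)$ for a.e.~$\xi$, and then integrate out $\xi$ by dominated convergence, supplying Potter bounds for the dominating function. In effect you are re-deriving Breiman's lemma (whose standard proof is exactly this Potter-plus-DCT argument) inline rather than citing it, and you push the moment estimate on the centered sum to the conditional level rather than pooling it once; this makes the domination step (tracking how the Fuk--Nagaev constants depend on $\nu_\xi$, and handling the small-$\xi$-probability region where $m(\xi)$ is so large that Potter's bound does not directly apply) somewhat heavier than the paper's one-shot computation, but it buys a cleaner conceptual reduction to the known iid statement. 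The ingredients are the same --- the slack $\delta$ beyond $1\vee\kappa$ is used in both cases to get $\alpha > \kappa$ in the moment bound and $\E(m(\xi)^{\kappa+\varepsilon}) < \infty$ via Jensen --- so either route works; the paper's is shorter because it spends the conditional-independence structure exactly once (in the moment inequality) and then never conditions again.
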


\begin{proof}
To ease notation write $\widetilde A_i  = A_i - m(\xi)$. 

First we prove that as $x \to \infty$ for some $\alpha > \kappa$
\begin{equation} \label{eq:breiman-aux1}
\p \left(  \sum_{i=1}^B \widetilde A_i > x \right)
= o(x^{-\alpha}).
\end{equation}
In what follows, nonspecified limits are meant as $x \to \infty$,
and $c$ is finite positive constant, whose value is irrelevant, and 
may change from line to line.

By the assumption on $B$, for any $\beta \in (0,\kappa)$ we have 
$\E (B^\beta) < \infty$. Assume that $\kappa > 1$. Then by 
\cite[Lemma 2 (ii)]{BK},
for $1 < \alpha < \kappa +  \delta < 2 \kappa$ 
there exists $c = c(\alpha) $ depending 
only on $\alpha$, such that for any $n \geq 1$
\[
\E \left( \left|  \sum_{i=1}^n \widetilde A_i \right|^\alpha \right) 
\leq 
c n^{1 \vee \frac{\alpha}{2}} \,
\E \left(  \E [ | \widetilde A|^\alpha | \xi ] \right)
\leq c n^{1 \vee \frac{\alpha}{2}} \,
\E ( A^\alpha ).
\]
Thus
\begin{equation} \label{eq:rsum-moment1}
\E \left( \left|  \sum_{i=1}^B \widetilde A_i \right|^\alpha \right) 
\leq 
c \, \E ( B^{1 \vee \frac{\alpha}{2}} ) \, \E ( A^\alpha ) < \infty.
\end{equation}

For $\kappa \leq 1$, choose $\alpha$ and $\eta$ such that 
$\alpha - \eta < \kappa < \alpha < \kappa + \delta$, 
$2 \eta \leq \alpha < 1 + \eta$, and 
$\tfrac{\alpha}{\alpha - \eta} < 1 + \delta$. This is clearly possible,
if $\alpha$ is close enough to $\kappa$ and $\eta$ is small. Then,
by \cite[Lemma 2 (i)]{BK}
\[
\E \left( \left|  \sum_{i=1}^n \widetilde A_i \right|^\alpha \right) 
\leq 
c n^{\alpha - \eta} \,
\E \left(  \E [ | \widetilde A|^{\frac{\alpha}{\alpha - \eta}} | \xi ] \right)
\leq c n^{\alpha - \eta} \, 
\E (  A^{1 + \delta} ),
\]
implying that 
\begin{equation} \label{eq:rsum-moment2}
\E \left( \left|  \sum_{i=1}^B \widetilde A_i \right|^\alpha \right) 
\leq 
c \, \E ( B^{\alpha - \eta } ) \, \E ( A^{1 + \delta} ) < \infty.
\end{equation}
In both cases \eqref{eq:breiman-aux1} follows.

Next
\begin{equation} \label{eq:breiman-upper}
\begin{split}
\p \left( \sum_{i=1}^B A_i > x \right) 
& = \p \left( \sum_{i=1}^B \widetilde A_i + B m(\xi) > x \right) \\
& \leq \p  \left( \sum_{i=1}^B \widetilde A_i > \varepsilon x \right)
 + \p ( B m(\xi) > ( 1 - \varepsilon) x) \\
& \sim \E ( m(\xi)^{\kappa} ) \, ( 1 - \varepsilon)^{-\kappa} \,
\p ( B >  x ),
\end{split}
\end{equation}
by \eqref{eq:breiman-aux1}, 
a version of Breiman's lemma (see e.g.~Embrechts and Goldie 
\cite[Corollary]{EmbGold}) and the regular variation of $\p ( B > x)$.
Breiman's lemma is indeed applicable, as 
$\E (m(\xi)^{(1 \vee \kappa) + \delta} ) \leq 
\E (A^{(1 \vee \kappa) + \delta}) < \infty$. Similarly, for the lower bound
\begin{equation} \label{eq:breiman-lower}
\begin{split}
\p \left( \sum_{i=1}^B A_i > x \right) 
& \geq \p \left( B m(\xi) > ( 1 + \varepsilon) x, 
\sum_{i=1}^B \widetilde A_i > -\varepsilon x \right) \\
& \geq \p ( B m(\xi) > ( 1 + \varepsilon) x) -
\p \left( \left| \sum_{i=1}^B \widetilde A_i \right| > \varepsilon x \right) \\
& \sim \E ( m(\xi)^{\kappa} ) \, ( 1 + \varepsilon)^{-\kappa} \,
\p ( B >  x ).
\end{split}
\end{equation}
Letting $\varepsilon \to 0$ in \eqref{eq:breiman-upper} and 
\eqref{eq:breiman-lower}, the result follows.
\end{proof}

Iterating the previous statement, we obtain the following.

\begin{corollary*} 
For any $i \geq 0$ as $x \to \infty$
\begin{equation} \label{eq:cor}
\p ( \theta_0 \circ \ldots \circ \theta_{i-1} \circ B_i > x) 
\sim \p ( B > x) (\E ( m(\xi)^\kappa) )^{i}.
\end{equation}
\end{corollary*}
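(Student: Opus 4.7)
The plan is to iterate the preceding lemma via induction on $i$. The base case $i = 0$ is immediate: the composition is empty, so the left-hand side of \eqref{eq:cor} is $\p(B_0 > x) = \p(B > x)$, matching the right-hand side since $(\E(m(\xi)^\kappa))^0 = 1$.

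For the inductive step I would assume \eqref{eq:cor} for $i - 1$ and write
$$
\theta_0 \circ \theta_1 \circ \cdots \circ \theta_{i-1} \circ B_i = \theta_0 \circ N,
\qquad N := \theta_1 \circ \cdots \circ \theta_{i-1} \circ B_i.
$$
By the iid structure of the environment, the vector $(\xi_1, \ldots, \xi_i)$ together with the associated offspring and immigration variables has the same joint distribution as $(\xi_0, \ldots, \xi_{i-1})$ with its associated variables. Hence $N \stackrel{\mathcal{D}}{=} \theta_0 \circ \cdots \circ \theta_{i-2} \circ B_{i-1}$, and the inductive hypothesis gives
$$
\p(N > x) \sim \p(B > x)\,(\E(m(\xi)^\kappa))^{i-1},
$$
so that $\p(N > x)$ is regularly varying of index $-\kappa$. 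Moreover $N$ is integer valued and, as a function of $\xi_1, \ldots, \xi_i$, of $B_i$, and of the offspring variables attached to $\xi_1, \ldots, \xi_{i-1}$, it is independent of both $\xi_0$ and the conditionally iid sequence $(A_j^{(0)})_{j \ge 1}$. Since $\E(A^{(1 \vee \kappa) + \delta}) < \infty$ by \eqref{eq:grgr}, the hypotheses of the lemma are met with environment $\xi_0$, summands $A_j^{(0)}$, and number of terms $N$, yielding
$$
\p(\theta_0 \circ N > x) \sim \p(N > x)\, \E(m(\xi)^\kappa).
$$
Combining with the inductive hypothesis produces \eqref{eq:cor}.

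The only genuinely delicate point is the independence bookkeeping: one has to confirm that, after peeling off the outer $\theta_0$, the remainder $N$ is independent of both the environment $\xi_0$ and the summands $(A_j^{(0)})_{j \ge 1}$ governing $\theta_0$, since this is what allows the lemma to be reapplied. This is automatic from the iid structure of $(\xi_n)$ and the explicit construction of the variables given each environment, but it is exactly the feature that keeps the induction clean and deserves an explicit mention.
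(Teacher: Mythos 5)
Your proof is correct and matches the paper's intent exactly: the paper gives no explicit argument beyond ``Iterating the previous statement,'' and your induction on $i$ — peeling off $\theta_0$, invoking stationarity to identify $N = \theta_1 \circ \cdots \circ \theta_{i-1} \circ B_i$ with $\theta_0 \circ \cdots \circ \theta_{i-2} \circ B_{i-1}$, and reapplying Lemma~\ref{lemma:rsum-tail} — is precisely that iteration made explicit. The independence and regular-variation bookkeeping you flag (so that $N$ can legitimately play the role of $B$ in the lemma) is the only content the paper leaves implicit, and you have handled it correctly.
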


For the implication \eqref{eq:stat-tail} $\Rightarrow$ \eqref{eq:Btail}
we need the random sum 
version of Lemma 2 in Grey \cite{Grey}.

\begin{lemma} \label{lemma:grey}
Let $N$ be a nonnegative integer valued random variable for which
$\p ( N > x) \sim C \ell(x) x^{-\kappa}$, with some $C > 0$, and 
a slowly varying function $\ell$. Let $(\xi, B, A_1, A_2, \ldots)$
be independent of $N$, and conditionally on $\xi$ the variables
$(B, A_1, A_2, \ldots)$ are independent, $A_1, A_2, \ldots$ are iid
$\nu_{\xi}$, and $B$ has distribution $\nu_{\xi}^\circ$. Assume 
condition \eqref{eq:grgr}. Then as $x \to \infty$
\[
\p ( B > x ) \sim \frac{\ell(x)}{x^{\kappa}} \ \Longleftrightarrow \ 
\p \left( B + \sum_{i=1}^N A_i > x \right) 
\sim ( 1 + C \, \E (m(\xi)^\kappa) ) \frac{\ell(x)}{x^{\kappa}}.
\]
\end{lemma}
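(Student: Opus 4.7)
The plan is to reduce both implications to the single asymptotic equivalence
\[
\p(B+S > x) \sim \p(B > x) + \p(S > x), \qquad S := \sum_{i=1}^{N} A_i.
\]
First I apply Lemma \ref{lemma:rsum-tail} with $N$ in the role of the count variable; this is legitimate since $N$ is independent of $(\xi, A_1, A_2, \ldots)$ and $\p(N > x) \sim C\ell(x)/x^{\kappa}$, and yields
\[
\p(S > x) \sim C\,\E(m(\xi)^{\kappa})\,\ell(x)/x^{\kappa}
\]
unconditionally on any hypothesis about $B$. Since $1 + C\,\E(m(\xi)^{\kappa})$ is exactly the sum of the prefactors of $\p(B > x)$ and $\p(S > x)$, the displayed equivalence is equivalent to the stated iff.

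The analytic crux is the asymptotic near-independence of $B$ and $S$:
\[
\p(B > y,\, S > y) = o(\ell(y)/y^{\kappa}), \qquad y\to\infty.
\]
As $B$ and $S$ are independent conditionally on $\xi$, this joint probability equals $\E[F_{\xi}^{c}(y)\,G_{\xi}^{c}(y)]$, where $F_{\xi}^{c}(y) = \p(B > y\mid\xi)$ and $G_{\xi}^{c}(y) = \p(S > y\mid\xi)$. I apply Lemma \ref{lemma:rsum-tail} \emph{for a.e.\ fixed realization of $\xi$} -- legitimate because $\E(A^{(1\vee\kappa)+\delta}\mid\xi) < \infty$ almost surely -- to obtain the pointwise asymptotic $G_{\xi}^{c}(y) \sim C\, m(\xi)^{\kappa}\,\ell(y)/y^{\kappa}$ a.s.\ as $y\to\infty$. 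Combined with $F_{\xi}^{c}(y)\to 0$ a.s., the normalized integrand $F_{\xi}^{c}(y)\,G_{\xi}^{c}(y)/(\ell(y)/y^{\kappa})$ tends to $0$ almost surely. It is dominated by $G_{\xi}^{c}(y)/(\ell(y)/y^{\kappa})$, and this dominating family converges both a.s.\ and in $L^{1}$ to $C\, m(\xi)^{\kappa}$ (its mean tends to the finite limit $C\,\E(m(\xi)^{\kappa})$), hence is uniformly integrable; Vitali's convergence theorem then closes the estimate.

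With the key estimate in place, the forward implication follows from two standard bounds: upper, $\{B+S > x\} \subseteq \{B>(1-\varepsilon)x\}\cup\{S>(1-\varepsilon)x\}\cup\{B>\varepsilon x,\,S>\varepsilon x\}$; lower (inclusion--exclusion), $\p(B+S>x)\ge \p(B>(1+\varepsilon)x)+\p(S>(1+\varepsilon)x)-\p(B>(1+\varepsilon)x,\,S>(1+\varepsilon)x)$. In each case the joint tail term is $o(\ell(x)/x^{\kappa})$ by the key estimate, so letting $\varepsilon\to 0$ yields $\p(B+S > x)\sim (1+C\,\E(m(\xi)^{\kappa}))\,\ell(x)/x^{\kappa}$. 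For the converse, the same two inequalities are rearranged to isolate $\p(B > (1\mp\varepsilon)x)$ in terms of the assumed asymptotics of $\p(B+S > x)$ and of $\p(S > \cdot)$; after rescaling $y = (1\pm\varepsilon) x$, using slow variation of $\ell$, and letting $\varepsilon\to 0$, both $\liminf$ and $\limsup$ of $\p(B > y)/(\ell(y)/y^{\kappa})$ are squeezed to $1$. I expect the Vitali step for the key estimate to be the one non-routine ingredient; the surrounding argument is only a combination of Breiman-style splittings with elementary properties of slowly varying functions.
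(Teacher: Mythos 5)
Your proof is correct, but it takes a genuinely different route from the paper's. The paper's proof is essentially a two-line reduction to Grey's Lemma 2 for stochastic recurrence equations: writing $B + \sum_{i=1}^N A_i = B + N m(\xi) + \sum_{i=1}^N \widetilde A_i$ and using the moment bounds \eqref{eq:rsum-moment1}/\eqref{eq:rsum-moment2} to show the centered sum has a tail that is $o(x^{-\alpha})$ with $\alpha > \kappa$, the iff becomes exactly Grey's Lemma 2 applied with $D=B$, $C=m(\xi)$, $Y=N$, which already handles both directions of the equivalence. You instead prove the one-big-jump additivity $\p(B+S>x) \sim \p(B>x) + \p(S>x)$ from scratch: the non-routine ingredient is the joint tail bound $\p(B>y,\,S>y)=o(\ell(y)/y^{\kappa})$, which you obtain by applying Lemma \ref{lemma:rsum-tail} conditionally on $\xi$ (legitimate, since $\E(A^{(1\vee\kappa)+\delta}\mid\xi)<\infty$ a.s.\ and $N$ is independent of $\xi$) to get $\p(S>y\mid\xi)/(\ell(y)/y^{\kappa}) \to C\,m(\xi)^{\kappa}$ a.s., and then close the estimate with a Scheff\'e/Vitali uniform-integrability argument since the unconditional mean converges to $C\,\E(m(\xi)^{\kappa})$ by the unconditional Lemma \ref{lemma:rsum-tail}; the iff then follows from standard Breiman-style splittings and slow variation, in both directions, exactly as you describe. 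What your approach buys is self-containment --- it avoids invoking Grey's Lemma 2 as a black box and exposes the probabilistic mechanism (asymptotic independence of $B$ and $S$) directly --- at the cost of a more elaborate argument, since the paper's reduction only requires the one-sided moment bound rather than the full conditional asymptotic and UI machinery. One minor point worth flagging: your key estimate must hold \emph{without} any assumption on the tail of $B$ (it is used in the converse direction too); your argument does deliver this, since $\p(B>y\mid\xi)\to 0$ a.s.\ follows just from $B$ being a.s.\ finite, but it is worth making this explicit.
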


\begin{proof}
The statement follows from Lemma 2 in \cite{Grey},
since the right-hand side above is equivalent to
\[
\p ( B + N m(\xi) > x) \sim ( 1 + C \, \E (m(\xi)^\kappa) ) \ell(x) x^{-\kappa}.
\]
Indeed, 
\[
B + \sum_{i=1}^N A_i = B + N m(\xi) + \sum_{i=1}^N \widetilde A_i, 
\]
where the sum on the right-hand side,
by \eqref{eq:rsum-moment1} or \eqref{eq:rsum-moment2} 
has finite moment of order $\alpha > \kappa$,
implying that its tail is $o(x^{-\alpha})$.
\end{proof}

We are ready to prove the main result.

\begin{proof}[Proof of the Theorem]
As in \cite{Grey}, 
implication \eqref{eq:stat-tail} $\Rightarrow$ \eqref{eq:Btail}
follows from Lemma \ref{lemma:grey} with
the choice $N = X_\infty$ and $C = 1/(1 - \E (m(\xi)^\kappa))$.

We turn to \eqref{eq:Btail} $\Rightarrow$ \eqref{eq:stat-tail}.
We follow the proof of the Grincevi\v{c}ius--Grey theorem 
in \cite[Sect.~2.4.3]{BDM}.

To ease notation write 
$\theta_0 \circ \ldots \circ \theta_{i-1} = \Theta_{i-1}$, and
$m(\xi_0) \ldots m(\xi_{i-1}) = \Pi_{i-1}$, $i = 0, 1,2,\ldots$,
with the convention $\Theta_{-1} \circ B_0 = B_0$ and 
$\Pi_{-1} = 1$.
For $K > 1$ we use the decomposition
\[
X_\infty \stackrel{\mathcal{D}}{=} 
\left( \sum_{i=0}^K + \sum_{i=K+1}^\infty \right) 
\Theta_{i-1} \circ B_i =: \widetilde X_K + \widetilde X^K.
\]

For any $i > j \geq 0$
\begin{equation} \label{eq:joint-bound}
\begin{split}
& \p ( \Theta_{i-1} \circ B_i > x, \Theta_{j-1} \circ B_j > x) \\
& \leq \p ( \Pi_{i-1}  B_i > ( 1 - \varepsilon) x , 
\Pi_{j-1}  B_j > ( 1 - \varepsilon) x) \\
& \quad + 
\p ( \Theta_{i-1} \circ B_i - \Pi_{i-1} B_i  > \varepsilon x)
+ \p ( \Theta_{j-1} \circ B_j - \Pi_{j-1} B_j  > \varepsilon x)
\\
& = o( \p ( B > x)), \quad x \to \infty,
\end{split}
\end{equation}
where in the last asymptotics we used 
\eqref{eq:breiman-aux1} 
for the last two terms and for the first term (2.4.52) in \cite{BDM}.
Thus by \cite[Lemma B.6.1]{BDM} and \eqref{eq:cor}
\begin{equation} \label{eq:Xtail_K}
\p ( \widetilde X_K > x) \sim 
\p ( B > x) \, \sum_{i=0}^K (\E ( m(\xi)^\kappa))^i.
\end{equation}
The result follows from \eqref{eq:Xtail_K} with $K \to \infty$, 
provided we show that
\begin{equation} \label{eq:Xtail_inf}
\lim_{K \to \infty}
\limsup_{x \to \infty} \frac{\p ( \widetilde X^K > x) }{\p (B > x)} = 0.
\end{equation}
We have 
\begin{equation} \label{eq:tailK}
\begin{split}
\p ( \widetilde X^K > x) & \leq 
\p \left( \sum_{i=K+1}^\infty 
(\Theta_{i-1} \circ B_i - \Pi_{i-1} B_i ) > \frac{x}{2} \right) \\
& \quad + 
\p \left( \sum_{i=K+1}^\infty \Pi_{i-1} B_i > \frac{x}{2} \right).
\end{split}
\end{equation}
For the second term above, by the proof in the 
usual Grincevi\v{c}ius--Grey theorem, see \cite[(2.4.53)]{BDM}
\[
\lim_{K \to \infty} \limsup_{x \to \infty} 
\frac{\p \left( \sum_{i=K+1}^\infty \Pi_{i-1} B_i > \frac{x}{2} \right)}
{\p( B > x)} = 0.
\]
For the first term in \eqref{eq:tailK}, by Markov's inequality
with $\alpha > \kappa$ chosen as in the proof of Lemma \ref{lemma:rsum-tail}
\[
\begin{split}
& 
\p \left( \sum_{i=K+1}^\infty 
(\Theta_{i-1} \circ B_i - \Pi_{i-1} B_i ) > \frac{x}{2} \right) 
\\ & 
\leq \frac{2^\alpha}{x^{\alpha} }
\E \left| \sum_{i=K+1}^\infty 
(\Theta_{i-1} \circ B_i - \Pi_{i-1} B_i) \right|^\alpha.
\end{split}
\]

For what follows we need that if $\E (m(\xi)^\alpha) < 1$, then
for some $\rho \in (0,1)$, $c > 0$,
\begin{equation} \label{eq:Etheta-bound}
\E [ (\Theta_{n-1} \circ 1)^\alpha ] \leq c \rho^{n}.
\end{equation}
If $\alpha \leq 1$ or $\alpha > 1$ and $\E (m(\xi)^\alpha) > \E( m(\xi))$,
then Lemma 3.1 by Buraczewski and Dyszewski \cite{BD} gives precise
asymptotics. Furthermore, 
Proposition 3.1  in the first arXiv version of \cite{BD} states the 
necessary bound. For completeness, we sketch the proof as given 
in the first arXiv version of \cite{BD}. If $\alpha \leq 1$, then 
\eqref{eq:Etheta-bound} with $c = 1$ and $\rho = \E ( m(\xi)^\alpha) < 1$
follows from the conditional Jensen inequality. For $\alpha > 1$ choose 
$\varepsilon > 0$ small enough so that 
$\widetilde \rho := (1 + \varepsilon) \E (m(\xi)^\alpha))  < 1$. There is 
$C = C(\alpha, \varepsilon) > 0$ such that 
$(x + y)^\alpha \leq (1 + \varepsilon) x^\alpha + C y^\alpha$ for $x >0, y > 0$.
Therefore, writing $Z_{n} = \Theta_{n-1} \circ 1$ 
(thus $Z_n$ is a branching process in a random environment without 
immigration) we have by \eqref{eq:rsum-moment1}
\begin{equation} \label{eq:BD-bound}
\begin{split}
\E ( Z_n^\alpha ) & = 
\E \left[ \bigg( Z_{n-1} m(\xi_{n-1}) + \sum_{i=1}^{Z_{n-1}} (A_i^{(n-1)} - 
m(\xi_{n-1})) \bigg)^\alpha \right] \\
& \leq (1 + \varepsilon) \, \E ( m(\xi)^\alpha) \, \E (Z_{n-1}^\alpha)
+ c \, \E \Big( Z_{n-1}^{1 \vee \frac{\alpha}{2}} \Big) \, \E (A^\alpha) \\
& \leq \widetilde \rho \, \E ( Z_{n-1}^\alpha) + 
c \, \E (Z_{n-1}^{1 \vee \frac{\alpha}{2}}).
\end{split}
\end{equation}
For $\alpha \leq 2$, we obtain
\[
\E ( Z_n^\alpha ) \leq \widetilde \rho \, \E ( Z_{n-1}^\alpha) + 
c \,  (\E(m(\xi)))^n,
\]
which after iteration implies \eqref{eq:Etheta-bound} with 
$\rho > \max \{ \widetilde \rho, \E (m(\xi)) \}$.
Once we have the exponential decrease for $\alpha \in (1,2]$, by \eqref{eq:BD-bound}
we have it for $\alpha \in (2,4]$, and similarly \eqref{eq:Etheta-bound} 
follows for any $\alpha$ by induction.
Here, we used that the function $\lambda(\alpha)= \E (m(\xi)^\alpha)$
is convex, $\lambda(0) = 1$, and $\lambda'(0) < 0$ (by subcriticality).
\smallskip

For $\kappa > 1$ by \eqref{eq:rsum-moment1}, recalling also that 
$\tfrac{\alpha}{2} < \kappa$, we obtain
\[
\begin{split}
\E \left| \Theta_{i-1} \circ B_i - \Pi_{i-1} B_i \right|^\alpha 
& \leq c \, \E ( B^{1 \vee \frac{\alpha}{2}}) \, 
\E [ (\Theta_{i-1} \circ 1)^\alpha) ] \\
& \leq c \, \E ( B^{1 \vee \frac{\alpha}{2}}) \rho^i,
\end{split}
\]
for some $\rho < 1$, by \eqref{eq:Etheta-bound}. Therefore, using the
Minkowski inequality
\[
\begin{split}
\E \left( \left| \sum_{i=K+1}^\infty 
(\Theta_{i-1} \circ B_i - \Pi_{i-1} B_i) \right|^\alpha \right) \leq 
\left( c [ \E ( B^{1 \vee \frac{\alpha}{2}}) ]^{\frac{1}{\alpha}} \,
\sum_{i=K+1}^\infty \rho^{\frac{i}{\alpha}} \right)^\alpha \leq 
c \rho^{K}.
\end{split}
\]
For $\kappa \leq 1$ we use the Minkowski inequality for $\alpha > 1$, and 
subadditivity for $\alpha \leq 1$, 
together with \eqref{eq:rsum-moment2} and \eqref{eq:Etheta-bound},
and a similar bound follows. Therefore, \eqref{eq:Xtail_inf} holds, and thus 
the proof is complete.
\end{proof}

\begin{remark*}
The proof works in the deterministic environment case, however then 
it is much simpler. Indeed, $\Pi_{i-1} = \mu^{i}$, where $\mu = \E (A)$, and 
\eqref{eq:joint-bound} becomes trivial because of the independence of 
$B_i$ and $B_j$. Furthermore, for the second term in \eqref{eq:tailK}
instead of referring to the proof of the Grincevi\v{c}ius--Grey theorem in \cite{BDM}, 
by the Potter bounds we have for $\gamma \in (\mu, 1)$, 
$\varepsilon = \tfrac{\kappa}{2} \wedge 1$, for $x$ large enough
\[
\begin{split}
\p \left( \sum_{i= K+1}^\infty \mu^{i} B_i > \frac{x}{2} \right) 
& \leq 
\sum_{j=0}^\infty \p \left( \mu^{j+ K+1} B_j > 
\frac{x}{2} (1-\gamma) \gamma^j \right) \\
& \leq 2 \sum_{j=0}^\infty \p ( B > x ) \, 
\left( \frac{(1 - \gamma)\gamma^j}{2 \mu^{j+K+1}}  \right)^{-\kappa + \varepsilon},
\end{split}
\]
implying the necessary bound.
The first term in  \eqref{eq:tailK} can be handled the same way, since
\eqref{eq:Etheta-bound} holds for $\alpha \geq 1$ by bound (11) in 
Kevei and Wiandt \cite{KW}, while for $\alpha < 1$ it follows from 
Jensen's inequality.
\end{remark*}

\bigskip

\noindent \textbf{Acknowledgements.}
I am grateful to M\'aty\'as Barczy for useful comments and suggestions.
This research was supported by the
J\'{a}nos Bolyai Research Scholarship of the Hungarian Academy of Sciences.

\end{document}